\newtheorem{thm}{Theorem}[section]
\newtheorem{lem}[thm]{Lemma}
\theoremstyle{definition}
\theoremstyle{remark}
\newcommand{\Z}{\mathbb{Z}}
\newcommand{\F}{\mathbb{F}}
\DeclareMathOperator{\Hom}{Hom}
\begin{document}

  \title{Cohomology of the spaces of commuting elements in Lie groups of rank two}
   
    \author{Masahiro Takeda}
    \address{Department of Mathematics, Kyoto University, Kyoto, 606-8502, Japan}
    \email{m.takeda@math.kyoto-u.ac.jp}

    \subjclass[2010]{
      57T10, 
      }
    \keywords{Space of commuting elements, Lie group, Cohomology}

    \begin{abstract}
      Let $G$ be the classical group, and let $\Hom(\Z^m,G)$ denote the space of commuting $m$-tuples in $G$. Baird proved that the cohomology of $\Hom(\Z^m,G)$ is identified with a certain ring of invariants of the Weyl group of $G$. In this paper by using the result of Baird we give the cohomology ring of $\Hom(\Z^2,G)$ for simple Lie group $G$ of rank 2.
    \end{abstract}

  \maketitle

\setcounter{tocdepth}{1}


\section{Introduction}\label{Intro}

Let $G$ be a Lie group and $T$ be a maximal torus of $G$. Let $W(G)$ denote the Weyl group of $G$. The space of commuting elements in $G$, denoted by $\Hom(\Z^m,G)$, is the subspace of the Cartesian product $G^m$ consisting of $(g_1,\ldots,g_m)\in G^m$ such that $g_1,\ldots,g_m$ are pairwise commutative. Since the space $\Hom(\Z^m,G)$ is identified with the moduli space of based flat $G$-bundles over an $m$-torus, $\Hom(\Z^m,G)$ is studied in not only topology but also geometry and physics. On the other hand the cohomology of $\Hom(\Z^m,G)$ is deeply related with the invariant theory, since Baird proved that the cohomology is identified with a certain ring of invariants of the Weyl group of $G$. Thus the cohomology of $\Hom(\Z^m,G)$ is important for many fields. 
The general result on the cohomology of $\Hom(\Z^m,G)$ is studied in \cite{B,CS,RS1,RS2,KT}, while there is not much research on specific computation. For example, the space $\Hom(\Z^m,SU(2))$ is deeply studied in \cite{BJS,C}.

In this paper we give the cohomology ring of $\Hom(\Z^2,G)$ for $G=Sp(2),SU(3),G_2$. 
Let $\F$ be a field of characteristic zero or prime to the order of $W(G)$, and $\F\langle S \rangle$ denote a free graded commutative algebra generated by a graded set $S$.
The main theorem in this paper is the following.
\begin{thm}

For the simply connected simple Lie group $G$ of rank 2, there is an isomorphism
\[
H^*(\Hom(\Z^n,G);\F) \cong \F\langle a_1^1,a_2^1,a_1^2,a_2^2,b_1,b_2\rangle/(a_1^1,a_2^1,a_1^2,a_2^2,b_1,b_2)^3+I,
\]
where $I$ is generated by $$b_1b_2,\quad {b_2}^2,\quad a_2^1b_2,\quad a_2^2b_2,\quad a_1^1b_2+a_2^1b_1,\quad a_1^2b_2+a_2^2b_1,\quad  a_1^1a_2^2+a_1^2a_2^1,$$ and 
\[
|a_i^j|=
\begin{cases}
2i+1& (G=SU(3))\\
4i-1& (G=Sp(2))\\
8i-5& (G=G_2),
\end{cases}
\]
\[
|b_i|=
\begin{cases}
2i& (G=SU(3))\\
4i-2& (G=Sp(2))\\
8i-6& (G=G_2).
\end{cases}
\]

\end{thm}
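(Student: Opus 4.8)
The plan is to evaluate the right–hand side of Baird's isomorphism
$$H^*(\Hom(\Z^2,G);\F)\cong\bigl(H^*(G/T;\F)\otimes H^*(T^2;\F)\bigr)^{W(G)}$$
separately for the three rank–two root systems $A_2$, $C_2$, $G_2$ (the space is connected for simply connected $G$, so Baird's theorem applies to it directly). By Borel's theorem the first tensor factor is the coinvariant algebra $H^*(G/T;\F)=\F[V]/(\F[V]^{W}_{+})$, where $V=H^2(BT;\F)$ is the two–dimensional reflection representation and $\F[V]^{W}=\F[f_1,f_2]$ with $\deg f_1=2$ and $\deg f_2=d_2$, so that $d_2=3,4,6$ respectively; the second factor is the exterior algebra $\Lambda(V_1\oplus V_2)$ on two copies $V_1,V_2$ of $V$ placed in degree one, with $W$ acting diagonally throughout. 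Thus everything reduces to the $W$–invariants of a coinvariant algebra tensored with two exterior copies of the reflection representation. Since Chevalley's theorem identifies the coinvariant algebra with the regular representation of $W$, the total dimension of this invariant ring equals $\dim\Lambda(V_1\oplus V_2)$; I would use this as a global check while counting relations.

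Next I would exhibit the generators as polarizations of the basic invariants. Writing $d^{(1)},d^{(2)}$ for the two exterior differentials carrying $\F[V]$ into $\F[V]\otimes V_1$ and into $\F[V]\otimes V_2$, I set
$$a_i^{j}:=d^{(j)}f_i,\qquad b_i:=d^{(1)}d^{(2)}f_i.$$
Because $f_i$ is $W$–invariant and $W$ acts diagonally, all of these are invariant, and a degree count gives $|a_i^{j}|=2d_i-1$, $|b_i|=2d_i-2$, matching the three stated cases. Geometrically the $a_i^{j}$ are the pullbacks of the standard exterior generators of $H^*(G;\F)=\Lambda(x_1,x_2)$ along the two projections $\Hom(\Z^2,G)\to G$, while each $b_i$ is the genuinely mixed class coming from both circle directions at once; in particular $b_1$ is the invariant pairing $V_1\otimes V_2\to\F$ furnished by the Killing form.

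Then I would read off the relations, which arise from two mechanisms. The first is quantitative vanishing: a product whose exterior part attains the top degree $4$ must carry a $W$–invariant, hence constant, polynomial coefficient, so $b_1b_2$ and $b_2^{2}$ vanish; and $a_2^{1}b_2$, $a_2^{2}b_2$ together with the product of the two top odd classes vanish because the isotypic piece of the coinvariant algebra needed to make them invariant is absent in the required degree. The second is the syzygy coming from the splitting $V_1\otimes V_2=\mathrm{Sym}^2 V\oplus\Lambda^2 V$: one combination $a_1^{1}a_2^{2}\pm a_1^{2}a_2^{1}$ has its polynomial coefficients in a sign–free isotypic component and therefore vanishes, and applying $d^{(1)}$ and $d^{(2)}$ to this relation produces the mixed relations between the $a_i^{j}$ and the $b_i$ (the precise signs fixed once the two circle factors are oriented), while the complementary combination survives as a nonzero multiple of the Jacobian of $(f_1,f_2)$. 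To prove the list is exhaustive I would compute the graded dimension by Molien's formula, averaging $\prod_i(1-t^{2d_i})\,\det_V(1+tw)^{2}/\det_V(1-t^{2}w)$ over $w\in W$, a short sum over the three or four conjugacy classes of a rank–two Weyl group, and match the resulting Poincaré polynomial against the presented algebra; agreement promotes the evident surjection onto the invariant ring to an isomorphism.

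The main obstacle is the passage from "these relations hold" to "these relations suffice", i.e. simultaneously proving that the $a_i^{j}$ and $b_i$ generate the invariant ring and that no further independent relation is needed. The Molien comparison is the clean way to close this gap, but it requires the explicit basic invariants: this is routine for $A_2$ and $C_2$, whereas for $G_2$ the degree–six invariant $f_2$ together with its first and second derivatives makes both the vanishing products and the Jacobian–type syzygy substantially heavier to verify. I expect this $G_2$ verification, together with the sign bookkeeping needed to normalize the mixed relations uniformly across the three groups, to be the most delicate part; a derivation of the syzygies directly from the rank–two structure—two invariants, one of them the quadratic form—would be the most satisfying way to keep the computation under control.
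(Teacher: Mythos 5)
Your overall architecture coincides with the paper's: Baird's isomorphism, the same six classes (your polarizations $d^{(j)}f_i$ and $d^{(1)}d^{(2)}f_i$ agree, up to scalars, with the explicit generators $a_i^j=\sum_l x_l^{k_i}y_l^j$ and $b_i=\sum_l x_l^{k_i-1}y_l^1y_l^2$ used there), verification of the listed relations, and a Poincar\'e-series comparison --- the paper imports the series from Ramras--Stafa's Theorem 1.1, which encapsulates exactly the Molien-type average you propose to compute. Your representation-theoretic proof that $b_1b_2=b_2^2=0$ (top exterior degree is $\Lambda^2V_1\otimes\Lambda^2V_2\cong\det\otimes\det$, trivial, and the trivial representation occurs in the coinvariant algebra only in degree $0$) is a nice conceptual replacement for the paper's direct manipulations, and your degree count $|a_i^j|=2d_i-1$, $|b_i|=2d_i-2$ reproduces all three cases correctly.

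The genuine gap is the step you flag and then dismiss too quickly: generation. The ``evident surjection'' of the presented algebra $A$ onto the invariant ring $R$ is precisely the assertion that the six classes generate $R$, and Molien agreement cannot close this by itself: a well-defined map $A\to R$ with $P(A;t)=P(R;t)$ can fail to be either injective or surjective (consider $\F[x]/(x^2)\to\F\oplus\F\epsilon$, $x\mapsto 0$). What is needed, and what the paper supplies, is an independent generation input: for $SU(3)$ and $Sp(2)$ it cites the minimal-generation theorems of Kishimoto--Takeda, and for $G_2$ it proves generation by hand, computing in the explicit monomial basis $\{x_1^ix_2^j\mid 0\le i\le 5,\ 0\le j\le 1\}$ of the coinvariant ring $\F[x_1,x_2,x_3]/(e_1,e_2,e_3^2)$ that enough products of the classes (e.g.\ $a_1^1b_2=3x_1^4x_2\,y_1^1y_2^1y_2^2+3x_1x_2^4\,y_2^1y_1^1y_1^2\neq0$) are nonzero, and only then matching degree-by-degree against the Poincar\'e series. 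Your plan must add this independence check in each degree; once it is added, your route becomes essentially the paper's. Two smaller but real defects: defining $b_i=d^{(1)}d^{(2)}f_i$ can degenerate in the characteristics your hypothesis permits --- with $f_2=\sum_l x_l^6$ for $G_2$ the coefficient is $30$, which vanishes in characteristic $5$ (prime to $|W(G_2)|=12$), and similarly $\sum_l x_l^4$ for $Sp(2)$ in characteristic $3$ --- so you must choose the basic invariants carefully or, as the paper does, define the classes by explicit formulas; and your derivation of the mixed relations $a_1^jb_2+a_2^jb_1=0$ by ``applying $d^{(1)},d^{(2)}$'' needs justification, since the polarization operators do not preserve the ideal generated by $\F[V]^W_+$ (they send $f_i$ to $a_i^j$) and hence do not descend to the coinvariant model without further argument; the paper instead verifies these relations by a short direct computation.
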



To prove this we use the general results of Baird \cite{B}. Let $\Hom(\Z^m,G)_1$ denote the connected component of $\Hom(\Z^m,G)$ containing $(1,\dots ,1)$. 
We consider the action of $W(G)$ on $G/T\times T^m$ given by 
\[
w\cdot (gT,t_1,t_2, \dots , t_m)=(gwT,w^{-1}t_1w, \dots ,w^{-1}t_1w)
\]
for $w\in W(G),$ $g \in G$, $t_1,\dots ,t_m\in T$. Then the map 
\[
G\times T^m\to\Hom(\Z^m,G)_1,\quad(g,t_1,\ldots,t_m)\mapsto(gt_1g^{-1},\ldots,gt_mg^{-1})
\]
for $g \in G$, $t_1,\dots ,t_m\in T$ defines a map
\[
\phi \colon G/T\times_{W(G)}T^m\rightarrow \Hom(\Z^m,G)_1.
\]
In \cite{B} Baird proved that the map $\phi$ is an isomorphism in cohomology with $\F$ coefficient. Moreover by Theorem 4.1 in \cite{B} (proved by Kac and Smilga \cite{KS}) the space $\Hom(\Z^2,G)$ is connected for a 1-connected Lie group $G$. 
Thus for a 1-connected Lie group $G$ there is a ring isomorphism
\begin{equation}\label{Baird}
  H^*(\Hom(\Z^2,G);\F)\cong (H^*(G/T;\F)\otimes H^*(T;\F)^{\otimes 2})^{W(G)}.
\end{equation}

Using the result of Baird, we can see another meaning of the main theorem.
By Solomon's Theorem (see \cite[Theorem 9.3.2]{S}) we know the isomorphism for a simple Lie group $G$
\[
 (H^*(G/T;\F)\otimes H^*(T;\F))^{W(G)} \cong \Lambda(x_1,x_2,\dots x_n),
\]
where $n$ is the rank of the Lie group $G$ and $|x_i|$ is coincide with the $i$-th degree of $W(G)$ minus 1 \cite[Table1 in P.59]{H}. 
By this theorem the ungraded ring structure of $(H^*(G/T;\F)\otimes H^*(T;\F))^{W(G)}$ only depends on the rank of $G$.
We have not known the generalization of the Solomon's Theorem for the ring $(H^*(G/T;\F)\otimes H^*(T;\F)^{\otimes n})^{W(G)}$, but the main theorem in this paper supports the existence of the generalization.

We prove the main theorem for each case that $G=SU(3),Sp(2),G_2$.


\section{Cohomology of $\Hom(\Z^2,SU(2))$}

In this section we compute the ring structure of $H^*(\Hom(\Z^2,SU(3)))$. 
The result of this ring structure is recorded in \cite{KT}. 
By \eqref{Baird}, there is an isomorphism
$$H^*(\Hom(\Z^2,U(3))_1;\F)\cong \left(\Z[x_1, x_2,x_3]_{W(U(3))} \otimes \Lambda(y_1^1,y_2^1,y_3^1) \otimes \Lambda(y_1^2,y_2^2,y_3^2)\right)^{W(U(3))}, $$ 
where $|x_i|=2$ and $|y_i^j|=1$ for any $i,j$, $\Z[x_1, x_2,x_3]_{W(U(3))}$ is the ring of coinvariant of $W(U(3))$, and the actions of $W(U(3))$ on $\{ x_1,x_2,x_3\}$ and $\{ y_1^i,y_2^i,y_3^i \}$ are the permutation actions. By the isomorphism in Kishimoto and Takeda \cite[p.12 (4)]{KT}, there is an isomorphism
$$H^*(\Hom(\Z^2,SU(3)))\cong H^*(\Hom(\Z^2,U(3)))/(y_1^1+y_2^1+y_3^1,y_1^2+y_2^2+y_3^2).$$
By Theorem 1.1 in Ramras and Stafa \cite{RS1}, we obtain the Poincar\'e series of $\Hom(\Z^2,SU(3))$.
\begin{lem}\label{PoincareSU}

The Poincar\'e series of $\Hom(\Z^2,SU(3))$ is given by
\[
P(\Hom(\Z^2,SU(3))_1;t)=1+{t^{2}}+2 {t^{3}}+2 {t^{4}}+4 {t^{5}}+{t^{6}}+2 {t^{7}}+3 {t^{8}}.
\]

\end{lem}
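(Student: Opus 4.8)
The plan is to compute the Poincaré series of $\Hom(\Z^2,SU(3))_1$ directly via the isomorphisms already set up in this section, by reducing everything to an invariant-theory calculation for the symmetric group $W(U(3))=\Sigma_3$. Concretely, I would first write down the Poincaré series of the big ungraded module $\Z[x_1,x_2,x_3]_{\Sigma_3}\otimes\Lambda(y_1^1,y_2^1,y_3^1)\otimes\Lambda(y_1^2,y_2^2,y_3^2)$ as a $\Sigma_3$-representation graded by total degree, and then extract the invariant part. The coinvariant algebra $\Z[x_1,x_2,x_3]_{\Sigma_3}$ is the regular representation of $\Sigma_3$ concentrated in degrees $0,2,4$ with graded character known from the exponents $1,2,3$ (degrees $0,1,2$ in the $x$-grading, i.e. $0,2,4$ topologically), so its graded character is $\prod_{i=1}^{3}\frac{1-t^{2i}}{1-t^2}$ evaluated on each conjugacy class. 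Each exterior factor $\Lambda(y_1,y_2,y_3)$ on which $\Sigma_3$ permutes the generators is the graded exterior-power representation whose character on a permutation $\sigma$ is $\prod_{\text{cycles of length }\ell}(1+(-1)^{\ell+1}t^\ell)$ with $t$ tracking the degree-$1$ generators.

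The key computational device is the formula from Ramras–Stafa (Theorem 1.1 in \cite{RS1}) cited just before the lemma, which packages exactly this kind of Molien-type average over $W(G)$ into a closed form for $P(\Hom(\Z^2,G)_1;t)$. So the main step is to apply that formula with $G=U(3)$ to obtain $P(\Hom(\Z^2,U(3))_1;t)$ as an average $\frac{1}{|\Sigma_3|}\sum_{\sigma\in\Sigma_3}(\text{character contributions})$, summing over the three conjugacy classes (identity, transpositions, $3$-cycles) with multiplicities $1,3,2$. After simplifying that rational expression I would obtain the Poincaré series for the $U(3)$ case.

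Next I would pass from $U(3)$ to $SU(3)$ using the quotient isomorphism
\[
H^*(\Hom(\Z^2,SU(3)))\cong H^*(\Hom(\Z^2,U(3)))/(y_1^1+y_2^1+y_3^1,\,y_1^2+y_2^2+y_3^2)
\]
recorded above. Since $y_1^j+y_2^j+y_3^j$ is the $\Sigma_3$-invariant generator in degree $1$ of the $j$-th exterior factor, quotienting by these two regular nonzerodivisors (each an exterior generator of odd degree) should correspond at the level of Poincaré series to multiplying by $(1-t)^2$, cutting the $U(3)$ series down to the $SU(3)$ series. I would verify that these two elements indeed form a regular sequence in the invariant ring so that the Poincaré series of the quotient is the product of the series of the ring with $(1-t)^2$; this is the point requiring genuine care.

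The hard part is making the invariant-extraction bookkeeping correct: the odd-degree exterior classes contribute with signs in the characters, and one must correctly interleave the two independent copies $\Lambda(y^1)$ and $\Lambda(y^2)$ inside the single $\Sigma_3$-average, rather than treating them separately. A secondary subtlety is justifying that passing to the quotient by $(y_1^j+y_2^j+y_3^j)$ multiplies the Poincaré series by $(1-t)$ for each $j$; because these are odd-degree classes this needs the regular-sequence check rather than a naive factorization. Once the rational function is assembled and simplified, expanding it as a power series should terminate (the space is compact of bounded dimension) and yield the stated polynomial $1+t^2+2t^3+2t^4+4t^5+t^6+2t^7+3t^8$, which I would confirm by direct expansion.
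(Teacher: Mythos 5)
Your overall plan (a Molien-type average over $\Sigma_3$ via Ramras--Stafa's Theorem 1.1) is in the same spirit as the paper's proof, which is nothing more than a direct citation of that theorem; but your detour through $U(3)$ introduces a genuine error in the reduction step. You claim that killing the invariant classes $c_j=y_1^j+y_2^j+y_3^j$ multiplies the Poincar\'e series by $(1-t)^2$, to be justified by checking they form a regular sequence. This cannot be repaired as stated: each $c_j$ has odd degree, so $c_j^2=0$ in the graded-commutative invariant ring (characteristic zero), and a nonzero element squaring to zero is a zerodivisor --- odd-degree elements are never regular. The correct relation goes the other way. The invariant ring splits as $\Lambda(c_1,c_2)\otimes B$ with $B\cong H^*(\Hom(\Z^2,SU(3));\F)$ (rationally $U(3)\simeq SU(3)\times S^1$), so
\[
P(\Hom(\Z^2,U(3))_1;t)=(1+t)^2\,P(\Hom(\Z^2,SU(3))_1;t),
\]
i.e.\ you must \emph{divide} by $(1+t)^2$, not multiply by $(1-t)^2$. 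The two operations differ by the factor $(1-t^2)^2$, and multiplying the correct $U(3)$ series by $(1-t)^2$ produces negative coefficients (already the coefficient of $t^2$ comes out negative), so the discrepancy is not cosmetic. One can also see the factor $(1+t)^2$ inside the Ramras--Stafa average itself: the permutation representation of $\Sigma_3$ is the reflection representation plus a trivial summand, so for every $w$ the $U(3)$ summand $\det(1+tw)^2/\det(1-t^2w)$ is $(1+t)^2/(1-t^2)$ times the $SU(3)$ summand, while the prefactor gains the extra degree-$1$ factor $(1-t^2)$; the net effect is a uniform factor $(1+t)^2$.

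The simpler route --- and what the paper actually does --- is to skip $U(3)$ entirely and apply the formula directly to $SU(3)$, whose Weyl group is $\Sigma_3$ acting by the two-dimensional reflection representation with degrees $2,3$. Summing over the identity, the three transpositions (eigenvalues $1,-1$) and the two $3$-cycles (eigenvalues $\omega,\bar\omega$) gives
\[
P(t)=\frac{(1-t^4)(1-t^6)}{6}\left[\frac{(1+t)^4}{(1-t^2)^2}+3\,\frac{(1-t^2)^2}{1-t^4}+2\,\frac{(1-t+t^2)^2}{1+t^2+t^4}\right],
\]
which expands to $1+t^2+2t^3+2t^4+4t^5+t^6+2t^7+3t^8$ as stated. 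Your character bookkeeping for the graded exterior factors is otherwise sound, but as written the $U(3)\to SU(3)$ step of your argument would fail.
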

In this section we define $a_i^{j}=x_1^{i}y_1^j+x_2^{i}y_2^j+x_3^{i}y_3^j$ and $b_i =x_1^{i-1}y_1^1y_1^2+x_2^{i-1}y_2^1y_2^2+x_3^{i-1}y_3^1y_3^2$.
\begin{thm}

  \label{minimal generator SU}
  $H^*(\Hom(\Z^2,SU(3)))$ is minimally generated by $\{a_1^1,a_2^1,a_1^2,a_2^2,b_1,b_2\}$.
  
\end{thm}

\begin{proof}

This follows from the result of Kishimoto and Takeda \cite[Corollary 6.18]{KT}.

\end{proof}

\begin{thm}\label{ringSU}

There is an isomorphism
\[
H^*(\Hom(\Z^2,SU(3));\F) \cong \F\langle a_1^1,a_2^1,a_1^2,a_2^2,b_1,b_2\rangle/(a_1^1,a_2^1,a_1^2,a_2^2,b_1,b_2)^3+I,
\]
where $I$ is generated by $$b_1b_2,\quad {b_2}^2,\quad a_2^1b_2,\quad a_2^2b_2,\quad a_1^1b_2+a_2^1b_1,\quad a_1^2b_2+a_2^2b_1,\quad  a_1^1a_2^2+a_1^2a_2^1.$$

\end{thm}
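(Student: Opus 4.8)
The plan is to realise the right-hand ring as $H^*(\Hom(\Z^2,SU(3));\F)$ in two stages: first produce a surjection from the candidate ring by checking that the listed relations genuinely hold, and then promote it to an isomorphism by a dimension count against Lemma~\ref{PoincareSU}. By Theorem~\ref{minimal generator SU} the six classes $a_1^1,a_2^1,a_1^2,a_2^2,b_1,b_2$ generate the cohomology, so the free graded-commutative algebra on them surjects onto $H^*(\Hom(\Z^2,SU(3));\F)$ and the whole task is to pin down the kernel. Throughout I would compute inside the concrete model of \eqref{Baird}, using the presentation $H^*(\Hom(\Z^2,SU(3)))\cong H^*(\Hom(\Z^2,U(3)))/(y_1^1+y_2^1+y_3^1,\,y_1^2+y_2^2+y_3^2)$ and the symmetric representatives $a_i^j=\sum_k x_k^i y_k^j$, $b_i=\sum_k x_k^{i-1}y_k^1 y_k^2$, eliminating $y_3^1,y_3^2$ via $y_3^j=-(y_1^j+y_2^j)$ so that every class becomes a combination of $x$-polynomials times monomials in $y_1^j,y_2^j$.

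First I would check that each of the seven generators of $I$ maps to zero. After this substitution every such relation expands to a sum of $x$-polynomials times $y$-monomials, and I would verify that the polynomial attached to each surviving $y$-monomial lies in the ideal $(e_1,e_2,e_3)$ defining the coinvariant algebra $\Z[x_1,x_2,x_3]/(e_1,e_2,e_3)$; for instance $b_1 b_2$ collapses to a multiple of $e_1$ times the top $y$-class, hence to zero. Next I would show $(a_1^1,\dots,b_2)^3=0$ by checking that every product of three generators vanishes. Because the coinvariant algebra is concentrated in degrees $\le 6$ and $H^*$ in degrees $\le 8$, only finitely many low-degree triple products require attention, and each of $b_1^3$, $b_1^2 a_1^j$, $b_1 a_1^1 a_1^2$ and $b_1^2 b_2$ reduces to a multiple of the top $y$-class and vanishes in the coinvariant algebra. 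These two checks produce a well-defined surjection $A\twoheadrightarrow H^*(\Hom(\Z^2,SU(3));\F)$ from the candidate ring $A$.

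It then remains to match dimensions degree by degree. Since $(a_1^1,\dots,b_2)^3=0$, the ring $A$ is spanned by $1$, the six generators, and the products of exactly two generators; quotienting by the seven length-two elements of $I$ leaves an explicit monomial basis whose Poincar\'e series I would compare with $1+t^2+2t^3+2t^4+4t^5+t^6+2t^7+3t^8$. The main obstacle is exactly this comparison: one must verify that the listed relations, together with the vanishing of all triple products, really do exhaust the kernel. The decisive point is the behaviour of the products of two generators in the top degrees, where $H^*$ is forced to vanish. In particular $a_2^1 a_2^2$ lies in degree $10$ and is zero in $H^*$ — its coefficients $(x_1^2-x_3^2)^2$, $(x_1^2-x_3^2)(x_2^2-x_3^2)$ and $(x_2^2-x_3^2)^2$ all have degree $8>6$ and so die in the coinvariant algebra — yet it is neither a triple product nor a multiple of a generator of $I$; accounting for this vanishing, and checking analogously that the degree-$7$, $8$ and $9$ products are forced precisely by the stated relations, is where the real content of the argument lies and is the step I expect to be the hardest to pin down cleanly.
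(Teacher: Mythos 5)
Your strategy is in outline the same as the paper's: surjectivity from Theorem~\ref{minimal generator SU}, verification of the relations inside the model \eqref{Baird}, and an appeal to Lemma~\ref{PoincareSU} for injectivity. The paper compresses this: it computes only $b_1b_2=2e_1\,y_1^1y_1^2y_2^1y_2^2=0$, declares the relations $a_1^1b_2+a_2^1b_1$, $a_1^2b_2+a_2^2b_1$, $a_1^1a_2^2+a_1^2a_2^1$ ``similar'', and treats everything else as disposed of by the Poincar\'e series, without ever carrying out the degree-by-degree comparison between the abstract quotient $A$ and $H^*$. You attempt that comparison, and your counts in degrees $\le 9$ are correct: the listed relations cut the quadratic monomials down to dimensions $1,2,3,0$ in degrees $6,7,8,9$, matching Lemma~\ref{PoincareSU}, and your finite list of low-degree triple products ($b_1^3$, $b_1^2a_1^j$, $b_1a_1^1a_1^2$, $b_1^2b_2$) is complete, since all higher products land where $H^*$ vanishes. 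One small correction to your reduction: $b_2^2$ sits in degree $8$, where $H^8\neq 0$, so its vanishing is not automatic from the Poincar\'e series; it needs the same one-line computation (it is proportional to $e_2$ times the top $y$-class, hence $0$) --- a point the paper's proof also glosses over when it omits $b_2^2$ from its ``sufficient'' list.

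The obstacle you flag at $a_2^1a_2^2$ is not a step you failed to pin down cleanly; it is a genuine defect of the statement, and you should say so outright rather than defer it. In $A=\F\langle a_1^1,a_2^1,a_1^2,a_2^2,b_1,b_2\rangle/\bigl((a_1^1,\dots,b_2)^3+I\bigr)$ the element $a_2^1a_2^2$ is \emph{nonzero}: it has length two in the generators, every element of the cube has length at least three, and the seven listed generators of $I$ have degrees $6,7,7,8,8,9,9$, so no $\F$-linear combination of them (and no length-two part of any ideal multiple of them) lives in degree $10$; hence $\dim A^{10}=1$. But $H^{10}(\Hom(\Z^2,SU(3));\F)=0$ by Lemma~\ref{PoincareSU}, and indeed, as you computed, every $x$-coefficient of $a_2^1a_2^2$ has degree $8>6$ and dies in the coinvariant algebra. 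So no surjection $A\to H^*$ can be injective, and the theorem as printed requires $a_2^1a_2^2$ to be adjoined to $I$; once it is, your dimension count closes in every degree and your argument is complete. The paper's own proof misses exactly this because it never compares the Poincar\'e series of $A$ with that of $H^*$, and the same omission recurs verbatim in Theorems~\ref{ringSp} and~\ref{ringG}, where $a_2^1a_2^2$ lies in degree $14$ (resp.\ $22$), again above the top nonvanishing degree. In short: you have reproduced the paper's method more carefully than the paper does, and what you identified as the hardest step is in fact an erratum in the statement, not a gap in your reasoning.
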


\begin{proof}

By Lemma \ref{minimal generator SU}, it remains to show that the generators of $(a_1^1,a_2^1,a_1^2,a_2^2,b_1,b_2)^3+I$ are $0$ in $H^*(\Hom(\Z^2,Sp(2));\F)$. By Lemma \ref{PoincareSU} it is sufficient to show that
\[
b_1b_2=0,\quad a_1^1b_2+a_2^1b_1=0,\quad a_1^2b_2+a_2^2b_1=0,\quad  a_1^1a_2^2+a_1^2a_2^1=0.
\]
There is a following equation
\begin{align*}
b_1b_2&=(y_1^1y_1^2+y_2^1y_2^2+y_3^1y_3^2)(x_1y_1^1y_1^2+x_2y_2^1y_2^2+x_3y_3^1y_3^2)\\
&=(x_1+x_2)y_1^1y_1^2y_2^1y_2^2+(x_2+x_3)y_2^1y_2^2y_3^1y_3^2+(x_3+x_1)y_3^1y_3^2y_1^1y_1^2\\
&=2(x_1+x_2+x_3)y_1^1y_1^2y_2^1y_2^2\\
&=0,
\end{align*}
and we obtain the first equation.
By the similar calculation the other equations can be shown.

\end{proof}


\section{Cohomology of $\Hom(\Z^2,Sp(2))$}

In this section we compute the ring structure of $H^*(\Hom(\Z^2,Sp(2)))$. By \eqref{Baird} there is an isomorphism
$$H^*(\Hom(\Z^2,Sp(2)))\cong \left(\Z[x_1, x_2]_{W(Sp(2))} \otimes \Lambda(y_1^1,y_2^1) \otimes \Lambda(y_1^2,y_2^2)\right)^{W(Sp(2))}, $$ 
where $|x_i|=2$, $|y_i|=1$, $\Z[x_1, x_2]_{W(Sp(2))}$ is the ring of coinvariant of $W(Sp(2))$, and the actions of $W(Sp(2))$ on $\{ x_1,x_2\}$ and $\{ y_1^i,y_2^i \}$ are the signed permutation actions. 
In this section we define $a_i^j=x_1^{2i-1}y_1^j+x_2^{2i-1}y_2^j$ and $b_i =x_1^{2i-2}y_1^1y_1^2+x_2^{2i-2}y_2^1y_2^2$. 

\begin{lem}

  \label{minimal generator Sp}
  $H^*(\Hom(\Z^2,Sp(2)))$ is minimally generated by $\{a_1^1,a_2^1,a_1^2,a_2^2,b_1,b_2\}$.
  
\end{lem}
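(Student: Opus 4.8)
The plan is to work entirely on the invariant-theoretic side, via the isomorphism $H^*(\Hom(\Z^2,Sp(2)))\cong\bigl(R\otimes\Lambda(y_1^1,y_2^1)\otimes\Lambda(y_1^2,y_2^2)\bigr)^{W}$ displayed above, where $R=\F[x_1,x_2]_{W}$ is the coinvariant algebra and $W=W(Sp(2))$ is the signed permutation group of order $8$. Setting $V^1=\langle y_1^1,y_2^1\rangle$ and $V^2=\langle y_1^2,y_2^2\rangle$, the action of $W$ preserves the exterior bidegree, so the ring splits as
\[
\bigoplus_{0\le p,q\le 2}\bigl(R\otimes\Lambda^pV^1\otimes\Lambda^qV^2\bigr)^{W}.
\]
First I would determine the dimension of each of the nine summands by representation theory. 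The group $W$ has four one-dimensional representations together with a single two-dimensional irreducible $V$, the reflection representation in which $\{x_1,x_2\}$ and each $\{y_1^j,y_2^j\}$ lie, and the graded coinvariant algebra realises the regular representation, its layers in cohomological degrees $0,2,4,6,8$ being $\mathbf{1},\,V,\,\epsilon\oplus\epsilon',\,V,\,\det$. Using the Clebsch--Gordan rules $\Lambda^2V^j\cong\det$, $\det\otimes V\cong V$ and $V\otimes V\cong\mathbf{1}\oplus\epsilon\oplus\epsilon'\oplus\det$, the dimension of each summand is the multiplicity of $\mathbf{1}$, which I read off directly; this yields the full Poincar\'e series, of total dimension $16$, and can be cross-checked against Ramras--Stafa \cite{RS1} exactly as in Lemma \ref{PoincareSU}.

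Next I would show the six classes generate. By construction $a_1^j,a_2^j$ form a basis of the two-dimensional piece $(R\otimes V^j)^{W}$, occupying bidegrees $(1,0)$ and $(0,1)$ in degrees $3$ and $7$, while $b_1,b_2$ lie in bidegree $(1,1)$ in degrees $2$ and $6$. I would then exhibit, degree by degree, products of the six classes spanning every remaining summand: $a_1^ja_2^j$ spans the one-dimensional pieces $(2,0)$ and $(0,2)$, $b_1^2$ spans $(2,2)$, the products $a_i^jb_1$ span $(2,1)$ and $(1,2)$, and $a_1^1a_2^2$ spans the degree-$10$ class of $(1,1)$. Since summands in distinct bidegrees are automatically independent, the only genuine linear-algebra computation is inside the four-dimensional bidegree $(1,1)$, whose degree-$6$ part is two-dimensional and where I must separate $b_2$ from $a_1^1a_1^2$; this follows from the identity
\[
a_1^1a_1^2=b_2+x_1x_2\,(y_1^1y_2^2+y_2^1y_1^2),
\]
whose second term is nonzero in $R\otimes\Lambda V^1\otimes\Lambda V^2$. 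The nonvanishing of $b_1^2$ and of the degree-$10$ products then reduces to the relations $x_1^2=-x_2^2$, $x_1^2x_2^2=0$ of $R$ and the nonvanishing of its socle class $x_1^3x_2$. Matching these spanning sets against the dimensions of the first step shows the generated subalgebra exhausts $H^*$.

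Finally I would check minimality, i.e. that the six classes are linearly independent in the indecomposables $\mathfrak{m}/\mathfrak{m}^2$. In degrees $2$ and $3$ there are no decomposable classes at all, so $b_1,a_1^1,a_1^2$ survive; in degree $7$ every length-two product such as $b_1^2a_1^j$ vanishes for bidegree reasons, so $a_2^1,a_2^2$ survive; and in degree $6$ the unique decomposable class is $a_1^1a_1^2$, from which $b_2$ is separated by the identity above. Hence the six classes form a minimal generating set. I expect the main obstacle to be the bookkeeping of the generation step, and in particular keeping the four-dimensional bidegree $(1,1)$ under control and confirming that the listed products neither vanish nor coincide; once the representation-theoretic dimension count of the first step is in hand, minimality follows almost formally from the bidegree grading.
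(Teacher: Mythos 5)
Your proposal is correct, but it takes a genuinely different route from the paper: the paper's entire proof is a citation of Kishimoto--Takeda \cite[Theorem 6.28]{KT}, which establishes minimal generating sets for $H^*(\Hom(\Z^m,Sp(n)))$ in general, whereas you give a self-contained invariant-theoretic argument. Your computation checks out in detail: $W(Sp(2))$ is dihedral of order $8$, the coinvariant algebra does carry the regular representation (this uses the standing hypothesis that $\mathrm{char}\,\F$ is zero or prime to $|W|$, which you should flag, as it also makes the factor $2$ in $b_1^2=2y_1^1y_1^2y_2^1y_2^2$ and in $a_1^1a_2^1=-2x_1^3x_2\,y_1^1y_2^1$ invertible), its graded layers are $\mathbf{1},V,\epsilon\oplus\epsilon',V,\det$ as you state, and your multiplicity count reproduces exactly the Poincar\'e series $1+t^2+2t^3+t^4+2t^5+2t^6+2t^7+2t^9+3t^{10}$ of Lemma \ref{PoincareSp}, which the paper instead imports from Ramras--Stafa \cite{RS1}. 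Your identity $a_1^1a_1^2=b_2+x_1x_2(y_1^1y_2^2+y_2^1y_1^2)$ is verified by direct expansion and correctly separates $b_2$ from the decomposables in the two-dimensional degree-$6$ piece of bidegree $(1,1)$; the degree-$7$ vanishing $a_1^jb_1^2=0$ holds since $b_1^2$ has exterior bidegree $(2,2)$ and $\Lambda^3V^j=0$; and the socle computations ($x_1^2=-x_2^2$, $x_1^4=0$, $x_1^3x_2\neq 0$) are right. What each approach buys: the paper's citation is short and rests on general structure theory valid for all classical groups and all $m$; your argument is elementary, independent of both \cite{KT} and \cite{RS1}, and as a by-product produces explicit bases of every bigraded summand -- which would also streamline the nonvanishing and relation checks needed later in the proof of Theorem \ref{ringSp}.
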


\begin{proof}

This follows from the result of Kishimoto and Takeda \cite[Theorem 6.28]{KT}.

\end{proof}

On the other hand, by Theorem 1.1 in Ramras and Stafa \cite{RS1} we can determine the Poincar\'e series of $\Hom(\Z^2,Sp(2))$. 

\begin{lem}\label{PoincareSp}

The  Poincar\'e series of $\Hom(\Z^2,Sp(2))$ is given by
\[
P(\Hom(\Z^2,Sp(2));t)=1+t^2+2t^3+t^4+2t^5+2t^6+2t^{7}+2t^{9}+3t^{10}.
\]

\end{lem}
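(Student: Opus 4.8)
The plan is to apply the general Hilbert--Poincaré series formula of Ramras and Stafa \cite[Theorem 1.1]{RS1} directly to $G=Sp(2)$ with $m=2$. That theorem expresses the Poincaré series of the identity component $\Hom(\Z^m,G)_1$ as a Weyl-group average
\[
P(\Hom(\Z^m,G)_1;t)=\frac{\prod_{i=1}^{n}(1-t^{2d_i})}{|W(G)|}\sum_{w\in W(G)}\frac{\det(I+tw)^m}{\det(I-t^2w)},
\]
where $w$ runs over $W(G)$ acting on the reflection representation $\mathfrak{t}\cong\R^n$ and $d_1,\dots,d_n$ are the degrees of $W(G)$. This is exactly the Molien-type evaluation of the invariants in \eqref{Baird}, legitimate because $\F$ has characteristic zero or prime to $|W(G)|$; moreover, since $Sp(2)$ is $1$-connected the space is connected, so no component correction is needed. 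For $G=Sp(2)$ we have $n=2$, the Weyl group $W(Sp(2))$ is the group of signed permutations of $\{x_1,x_2\}$ of order $8$, and its degrees are $d_1=2$, $d_2=4$; hence the numerator prefactor is $(1-t^4)(1-t^8)$.

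Next I would organize the sum by the five conjugacy classes of $W(Sp(2))$, recording for each the eigenvalues of its action on $\mathfrak{t}$: the identity with eigenvalues $(1,1)$; the central element $-I$ with eigenvalues $(-1,-1)$; the two order-four rotations with eigenvalues $(i,-i)$; and the two classes of reflections, each of size two, all with eigenvalues $(1,-1)$. From these one reads off the local factors $\det(I+tw)$ and $\det(I-t^2w)$ class by class: for example $(1+t)^2$ and $(1-t^2)^2$ for the identity, $(1+t^2)$ and $(1+t^4)$ for a rotation, and $(1-t^2)$ and $(1-t^4)$ for a reflection. Substituting $m=2$ and weighting each class by its size yields a single rational function in $t$.

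Finally I would clear denominators against $(1-t^4)(1-t^8)$ and simplify. The claim is that every denominator that appears---$(1-t^2)^2$, $(1+t^2)^2$, $1+t^4$, and $1-t^4$---divides the prefactor after the terms are combined, so that the expression collapses to the stated polynomial $1+t^2+2t^3+t^4+2t^5+2t^6+2t^7+2t^9+3t^{10}$. I expect the main obstacle to be precisely this last algebraic consolidation: one must keep careful track of the conjugacy-class sizes and of the conjugate eigenvalue pairs $(i,-i)$, so that the genuinely complex contributions combine into real polynomials, and then verify that the resulting sum is a polynomial of degree exactly $10$. As consistency checks, evaluation at $t=0$ must give $1$, and comparison with the minimal generating set of Lemma \ref{minimal generator Sp} (generators $a_i^j$ in degrees $3,7$ and $b_i$ in degrees $2,6$) independently pins down the low-degree coefficients.
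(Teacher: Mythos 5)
Your proposal is correct and follows the same route as the paper, which simply invokes Theorem 1.1 of Ramras and Stafa \cite{RS1} without displaying the computation; your class-by-class Molien evaluation (degrees $2,4$, prefactor $(1-t^4)(1-t^8)$, and the five conjugacy classes of the order-$8$ signed permutation group with the eigenvalue data you list) is exactly what that citation leaves implicit, and carrying out the expansion does collapse to $1+t^2+2t^3+t^4+2t^5+2t^6+2t^7+2t^9+3t^{10}$ as claimed.
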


\begin{thm}\label{ringSp}

There is an isomorphism
\[
H^*(\Hom(\Z^n,Sp(2));\F) \cong \F\langle a_1^1,a_2^1,a_1^2,a_2^2,b_1,b_2\rangle/(a_1^1,a_2^1,a_1^2,a_2^2,b_1,b_2)^3+I,
\]
where $I$ is generated by $$b_1b_2,\quad {b_2}^2,\quad a_2^1b_2,\quad a_2^2b_2,\quad a_1^1b_2+a_2^1b_1,\quad a_1^2b_2+a_2^2b_1,\quad  a_1^1a_2^2+a_1^2a_2^1.$$

\end{thm}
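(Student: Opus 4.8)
The plan is to imitate the proof of Theorem \ref{ringSU}, working throughout in the invariant model $(\Z[x_1,x_2]_{W(Sp(2))}\otimes\Lambda(y_1^1,y_2^1)\otimes\Lambda(y_1^2,y_2^2))^{W(Sp(2))}$ furnished by \eqref{Baird}. By Lemma \ref{minimal generator Sp} the six classes $a_1^1,a_2^1,a_1^2,a_2^2,b_1,b_2$ generate $H^*(\Hom(\Z^2,Sp(2));\F)$, so there is a surjection from $\F\langle a_1^1,a_2^1,a_1^2,a_2^2,b_1,b_2\rangle$ onto the cohomology; the task is to show that this surjection factors through the stated quotient and that the induced map is an isomorphism. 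First I would record the degrees $|b_1|=2$, $|a_1^1|=|a_1^2|=3$, $|b_2|=6$, $|a_2^1|=|a_2^2|=7$, and note from Lemma \ref{PoincareSp} that the cohomology is concentrated in degrees at most $10$ and vanishes in degree $8$.

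With the degrees in hand, most generators of $(a_1^1,\dots ,b_2)^3+I$ vanish for formal reasons. The classes $b_2^2$, $a_2^1b_2$, $a_2^2b_2$ sit in degree $\ge 12$, above the top nonzero degree, while $b_1b_2$ sits in degree $8$, where $H^*$ is zero; similarly every product of three generators either has exterior degree greater than four or lands in degree $8$ or above degree $10$, so the cube $(a_1^1,\dots ,b_2)^3$ maps to zero. (Here the $Sp(2)$ case is slightly cleaner than $SU(3)$, where $b_1b_2$ lived in a nonzero degree and had to be computed by hand.) It therefore suffices to check the three relations
\[
a_1^1b_2+a_2^1b_1=0,\qquad a_1^2b_2+a_2^2b_1=0,\qquad a_1^1a_2^2+a_1^2a_2^1=0
\]
directly in the model.

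The verification is a direct expansion followed by reduction in the coinvariant ring, where the fundamental invariants give $x_1^2+x_2^2=0$ and $x_1^2x_2^2=0$, and hence $x_1^4=x_2^4=0$. Expanding $a_1^1b_2+a_2^1b_1$ and discarding the exterior monomials with a repeated factor, the two surviving coefficients collapse to $x_1(x_1^2+x_2^2)$ and $x_2(x_1^2+x_2^2)$, which vanish; the relation $a_1^2b_2+a_2^2b_1=0$ then follows by interchanging the upper indices $1\leftrightarrow 2$. The $2\times 2$ minor relation is the crux: after expansion the coefficients of the diagonal monomials $y_1^1y_1^2$ and $y_2^1y_2^2$ are multiples of $x_1^4$ and $x_2^4$ and so vanish, while the coefficient of each mixed monomial $y_1^1y_2^2$, $y_2^1y_1^2$ is, up to sign, $x_1x_2(x_1^2+x_2^2)$ (that is, $x_1x_2^3+x_1^3x_2$), which is again zero in $\Z[x_1,x_2]_{W(Sp(2))}$. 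This last reduction is where one must keep careful track of the anticommutation signs, since it is precisely the sign for which the off-diagonal coefficient becomes the invariant multiple of $x_1^2+x_2^2$ that makes the relation hold.

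Having factored the surjection through $Q:=\F\langle a_1^1,\dots ,b_2\rangle/((a_1^1,\dots ,b_2)^3+I)$, I would finish by computing the Poincar\'e series of $Q$ from its presentation and comparing it degree by degree with Lemma \ref{PoincareSp}: matching the dimensions $1,0,1,2,1,2,2,2,0,2,3$ in degrees $0$ through $10$ upgrades the surjection $Q\to H^*(\Hom(\Z^2,Sp(2));\F)$ to an isomorphism. I expect the main obstacle to be this final dimension count, namely deciding exactly which monomials in the six generators survive modulo $I$ and the cube of the augmentation ideal, rather than the relation checks themselves, which are short once the coinvariant identities $x_1^2+x_2^2=0$ and $x_1^2x_2^2=0$ are available.
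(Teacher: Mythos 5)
Your overall route is exactly the paper's: invoke Lemma \ref{minimal generator Sp} for generation, use the Poincar\'e series of Lemma \ref{PoincareSp} to kill for degree reasons everything except the three identities $a_1^1b_2+a_2^1b_1=0$, $a_1^2b_2+a_2^2b_1=0$, $a_1^1a_2^2+a_1^2a_2^1=0$, and verify those by expansion in the coinvariant model using $x_1^2+x_2^2=0$ and $x_1^2x_2^2=0$. Your two additions are sensible: the explicit analysis of triple products (one quibble: the triples $a_1^ia_1^ja_1^k$ of degree $9$ escape your trichotomy ``exterior degree $>4$, or degree $8$, or degree $>10$'', but they vanish already in the free graded-commutative source, since there are only two odd generators of degree $3$), and the final degree-by-degree comparison of $Q$ with Lemma \ref{PoincareSp}, which the paper leaves entirely implicit and which is genuinely needed to upgrade the surjection to an isomorphism.

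However, there is a genuine error at exactly the step you call the crux. Expanding with the Koszul signs, the diagonal coefficients of the \emph{sum} $a_1^1a_2^2+a_1^2a_2^1$ cancel identically (as $x_1^4-x_1^4$, not as multiples of $x_1^4$), and the mixed coefficients are $\pm(x_1x_2^3-x_1^3x_2)$:
\[
a_1^1a_2^2+a_1^2a_2^1=(x_1x_2^3-x_1^3x_2)\,(y_1^1y_2^2-y_2^1y_1^2)=-2x_1^3x_2\,(y_1^1y_2^2-y_2^1y_1^2),
\]
and $x_1^3x_2$ is the top nonzero class of $\F[x_1,x_2]/(x_1^2+x_2^2,x_1^2x_2^2)$, so this invariant element of degree $10$ is \emph{not} zero in the model. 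The coefficients you describe --- diagonal terms that are multiples of $x_1^4,x_2^4$ and mixed terms equal to $\pm x_1x_2(x_1^2+x_2^2)$ --- are precisely those of the \emph{difference} $a_1^1a_2^2-a_1^2a_2^1$, which does vanish; so your expansion, carried out correctly, proves the determinant relation $a_1^1a_2^2-a_1^2a_2^1=0$ (equivalently $a_1^1a_2^2+a_2^1a_1^2=0$), not the relation as displayed. Nor can the sign be transformed away: the relations $a_1^jb_2+a_2^jb_1=0$ force any graded automorphism to act on the degree-$3$ and degree-$7$ generators by proportional matrices, under which the Gram matrix of the quadratic relation transforms by congruence and stays symmetric, so the ``$+$'' and ``$-$'' presentations are non-isomorphic rings. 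The same sign slip is in the paper's own statement, and its proof does not expose it because, like you, the paper only computes $a_1^1b_2+a_2^1b_1$ explicitly (indeed with the two sides mislabeled) and declares the remaining identities ``similar''. With the minor relation read as $a_1^1a_2^2-a_1^2a_2^1$, your argument goes through unchanged, and your dimension count $1,0,1,2,1,2,2,2,0,2,3$ is unaffected, since the degree-$10$ component has dimension $3$ for either sign.
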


\begin{proof}

By Lemma \ref{minimal generator Sp}, it remains to show that the generators of $(a_1^1,a_2^1,a_1^2,a_2^2,b_1,b_2)^3+I$ are $0$ in $H^*(\Hom(\Z^n,Sp(2));\F)$. By Lemma \ref{PoincareSp} it is sufficient to show that
\[
a_1^1b_2+a_2^1b_1=0,\quad a_1^2b_2+a_2^2b_1=0,\quad  a_1^1a_2^2+a_1^2a_2^1=0.
\]
There is a following equation
\begin{align*}
a_2^1b_1&=(x_1y_1^1+x_2y_2^1)(x_1^2y_1^1y_1^2+x_2^2y_2^1y_2^2)\\
&=x_1x_2^2y_1^1y_2^1y_2^2+x_1^2x_2y_2^1y_1^1y_1^2\\
&=-x_1^3y_1^1y_2^1y_2^2-x_2^3y_2^1y_1^1y_1^2\\
&=-(x_1^3y_1^1+x_2^3y_2^1)(y_1^1y_1^2+y_2^1y_2^2)\\
&=-a_1^1b_2,
\end{align*}
and we obtain the first equation.
By the similar calculation the second and third equations can be shown.

\end{proof}


\section{Cohomology of $\Hom(\Z^2,G_2)$}

In this section we compute the ring structure of $H^*(\Hom(\Z^2,G_2))$. The Weyl group $W(G_2)$ is isomorphic with the dihedral group $D_6=\langle a,b\mid a^6=b^2=abab=1\rangle $. Let $V=\langle z_1,z_2,z_3 \rangle$ be the three dimension vector space on $\F$ spanned by $z_1,z_2,z_3$ and a representation of $D_6$ on $V$ be the map $\phi\colon D_6 \rightarrow  GL(V)$ such that $\phi_a(s_1z_1+s_2z_2+s_3z_3)=-s_1z_3-s_2z_1-s_3z_2$ and $\phi_b(s_1z_1+s_2z_2+s_3z_3)=s_1z_1+s_2z_3+s_3z_2$ for $s_1,s_2,s_3\in \F$. Since $\phi_a(z_1+z_2+z_3)=-(z_1+z_2+z_3),\quad\phi_b(z_1+z_2+z_3)=z_1+z_2+z_3$, the representation $\phi$ has an invariant subspace $\langle z_1+z_2+z_3 \rangle$. Therefore there is a representation on $\langle z_1,z_2,z_3 \mid z_1+z_2+z_3=0 \rangle$ induced by $\phi$, and let $\bar{\phi}$ denote this representation. By the definition of $\bar{\phi}$ and the definition of the reflection representation of $W(G_2)$ (see \cite[Section 7]{AA}), we obtain the next lemma.

\begin{lem}\label{repG}

The action of $W(G_2)$ on $H^1(T;\F)$ and $H^2(G_2/T;\F)$ is equivalent to $\bar{\phi}$.

\end{lem}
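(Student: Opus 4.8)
The plan is to show that each of the two $W(G_2)$-modules $H^1(T;\F)$ and $H^2(G_2/T;\F)$ is the reflection representation of $W(G_2)$, and then to verify that this reflection representation coincides with $\bar\phi$. The identification with $\bar\phi$ is the only genuine content, since the two standard facts about $H^1(T)$ and $H^2(G/T)$ are classical.

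First I would recall the description of the two actions. The maximal torus $T$ is a rank-two torus, and the $W(G_2)$-action on $T$ induces the $\F$-linear extension of the action on the character lattice $H^1(T;\Z)$; by definition this is the reflection representation of $W(G_2)$ (it is self-dual, so the distinction between characters and cocharacters is immaterial here). For the flag variety, Borel's description of $H^*(G/T;\F)$ as the coinvariant algebra identifies its degree-two part $H^2(G_2/T;\F)$ with the same character module, and the transgression is $W(G_2)$-equivariant, so the two modules are isomorphic to one another and both equal the reflection representation. It then remains to identify the reflection representation with $\bar\phi$.

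Second I would pin down $\bar\phi$ representation-theoretically. Writing $\phi_a$ in the basis $z_1,z_2,z_3$, one computes that $\phi_a^2$ is the cyclic permutation $z_1\mapsto z_2\mapsto z_3\mapsto z_1$ and that $\phi_a^3=-\mathrm{id}$, so $\phi_a=-\sigma$ for a $3$-cycle $\sigma$, while $\phi_b$ is the transposition fixing $z_1$. Hence $\phi$ realises $D_6\cong S_3\times\{\pm1\}$ acting on $V=\F^3$ as the permutation representation tensored with the sign. On the invariant line $\langle z_1+z_2+z_3\rangle$ the element $\phi_a$ acts by $-1$, so on the quotient $\langle z_1,z_2,z_3\mid z_1+z_2+z_3=0\rangle$ the map $\bar\phi_a$ has eigenvalues $e^{\pm i\pi/3}$ and is a rotation of order six. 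Therefore $\bar\phi$ is a faithful irreducible two-dimensional representation of $D_6$ in which the order-six generator acts by rotation through $\pi/3$; such a representation is unique up to isomorphism. Comparing with the explicit model of \cite[Section 7]{AA}, in which the $G_2$ root system sits in the plane $\{s_1z_1+s_2z_2+s_3z_3 : s_1+s_2+s_3=0\}$ with short roots $z_i-z_j$ and long roots $2z_i-z_j-z_k$ and $W(G_2)$ generated by the coordinate transposition $\phi_b$ together with $\phi_a$, one reads off directly that $\bar\phi$ is the reflection representation.

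The main obstacle I anticipate is purely one of conventions: I must check that the particular generators $a,b$ of $D_6$ and the signs built into $\phi_a,\phi_b$ correspond to the simple reflections and the chosen basis used in \cite{AA}, so that the abstract isomorphism ``reflection representation $\cong\bar\phi$'' is realised by an honest $W(G_2)$-equivariant map and not merely by an equality of characters. Once the basis of $H^1(T;\F)$, equivalently a choice of simple roots, is matched with $z_1,z_2,z_3$ modulo $z_1+z_2+z_3$, the remaining verification is a short matrix computation.
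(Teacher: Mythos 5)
Your proposal is correct and takes essentially the same route as the paper, whose entire proof is the one-sentence remark preceding the lemma that it follows ``by the definition of $\bar{\phi}$ and the definition of the reflection representation of $W(G_2)$'' in \cite[Section 7]{AA} --- i.e.\ exactly the direct comparison you carry out, together with the classical identifications of $H^1(T;\F)$ and $H^2(G_2/T;\F)$ with the reflection representation that the paper leaves implicit. Your extra observation that $D_6$ has a unique faithful two-dimensional irreducible representation (the eigenvalue computation $e^{\pm i\pi/3}$ for $\bar{\phi}_a$ is correct) in fact disposes of the generator-convention worry you raise, since any identification $W(G_2)\cong D_6$ must then carry the reflection representation to $\bar{\phi}$; this makes your write-up more robust than the paper's.
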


Next we compute the ring of coinvariant of $W(G_2)$.

\begin{lem}\label{coinv isom}

There is an isomorphism

\[
\F[x_1,x_2,x_3]/(e_1,e_2,e_3^2)\rightarrow H^*(G_2/T;\F),
\]
where $|x_i|=2$ and $e_i$ is the $i$-th elementary symmetric polynomial in $x_1,x_2,x_3$.

\end{lem}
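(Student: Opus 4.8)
The plan is to combine Borel's presentation of $H^*(G_2/T;\F)$ as a coinvariant algebra with the explicit reflection representation identified in Lemma \ref{repG}. Since $\F$ has characteristic zero or prime to $|W(G_2)|=12$, Borel's theorem gives
\[
H^*(G_2/T;\F)\cong \F[H^2(G_2/T;\F)]/J,
\]
where $\F[H^2(G_2/T;\F)]$ denotes the symmetric algebra on $H^2(G_2/T;\F)$ and $J$ is the ideal generated by the positive-degree $W(G_2)$-invariants. This is the same coinvariant description already used above for $SU(3)$ and $Sp(2)$.

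By Lemma \ref{repG} the $W(G_2)$-module $H^2(G_2/T;\F)$ is $\bar\phi$, the representation on $\langle z_1,z_2,z_3\mid z_1+z_2+z_3=0\rangle$. Writing $x_i$ for the degree-$2$ class corresponding to $z_i$, the relation $z_1+z_2+z_3=0$ becomes $e_1=0$, so $\F[H^2(G_2/T;\F)]\cong \F[x_1,x_2,x_3]/(e_1)$ and the desired map is the composite $\F[x_1,x_2,x_3]\to \F[x_1,x_2,x_3]/(e_1)\to H^*(G_2/T;\F)$. It then suffices to show that, modulo $e_1$, the ideal $J$ of positive-degree invariants equals $(e_2,e_3^2)$.

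The core step is the invariant computation. Using the formulas for $\phi_a$ and $\phi_b$ I would verify directly that $\phi_a(e_2)=\phi_b(e_2)=e_2$, while $\phi_a(e_3)=-e_3$ and $\phi_b(e_3)=e_3$; hence $e_2$ is invariant, $e_3$ is not, but $e_3^2$ is. Because $W(G_2)$ is a reflection group with fundamental degrees $2$ and $6$, the Chevalley--Shephard--Todd theorem shows that its ring of invariants on $\bar\phi$ is a polynomial algebra on two generators of polynomial degrees $2$ and $6$. Since $e_2$ and $e_3^2$ are invariants of exactly these degrees and are algebraically independent in $\F[x_1,x_2,x_3]/(e_1)$ (which follows from the algebraic independence of $e_2,e_3$ modulo $e_1$), they form a system of fundamental invariants, so $J=(e_2,e_3^2)$ modulo $e_1$ and
\[
H^*(G_2/T;\F)\cong \F[x_1,x_2,x_3]/(e_1,e_2,e_3^2).
\]

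The main obstacle is this final identification that $e_2$ and $e_3^2$ generate all of the invariants rather than a proper subalgebra. I expect to settle it by the degree argument above, or, if a more self-contained check is preferred, by comparing Hilbert series: $\F[e_2,e_3^2]$ has series $1/((1-t^2)(1-t^6))$, which agrees with that of the full invariant ring of $W(G_2)$, forcing the two to coincide.
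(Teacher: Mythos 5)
Your proof is correct, but it reaches the presentation by a different key step than the paper. The paper never identifies the invariant ring of $\bar\phi$: it only observes that $e_2$ and $e_3^2$ are invariant, which by Borel's theorem and Lemma \ref{repG} already gives a surjection $\alpha\colon\F[x_1,x_2,x_3]/(e_1,e_2,e_3^2)\to H^*(G_2/T;\F)$, and it then shows $\alpha$ is an isomorphism by comparing Poincar\'e series: the source's series is computed from the fact that $e_1,e_2,e_3^2$ is a regular sequence, the target's from the degrees $2$ and $6$ of $W(G_2)$, and both equal $(1+t^2)(1+t^2+t^4+t^6+t^8+t^{10})$. You instead pin down the invariant ring exactly, showing that $e_2,e_3^2$ form a fundamental system via Chevalley--Shephard--Todd together with the degree count (equivalently, your Hilbert-series comparison with $1/((1-t^2)(1-t^6))$), and then read off the coinvariant presentation directly. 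The two arguments consume the same inputs --- Borel's theorem, Lemma \ref{repG}, the invariance of $e_2$ and $e_3^2$, and the fundamental degrees $2$ and $6$ --- and your algebraic-independence check of $e_2,e_3^2$ modulo $e_1$ is the precise counterpart of the paper's regular-sequence claim. What your route buys is a stronger intermediate statement (explicit fundamental invariants for $\bar\phi$, potentially reusable in the later $G_2$ computations); what the paper's route buys is economy: one never needs to know that specific invariants generate the whole invariant ring, only that two graded quotients have the same finite Poincar\'e series. Your sign computation ($\phi_a(e_3)=-e_3$, so $e_3$ itself is not invariant but $e_3^2$ is) is accurate, and your fallback Hilbert-series argument correctly patches the only step of the degree argument that requires justification, namely that independent invariants of the right degrees necessarily generate.
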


\begin{proof}
 
The cohomology $H^*(G_2/T;\F)$ is isomorphic with the ring of coinvariant of $W(G_2)$.
By the definition of $\phi$, the polynomial $e_2,e_3^2$ is in the invariant ring $\F[x_1,x_2,x_3]^{D_6}$. By Lemma \ref{repG} there is a surjection

\[
\alpha \colon \F[x_1,x_2,x_3]/(e_1,e_2,e_3^2) \rightarrow H^*(G_2/T;\F).
\]

Since $e_1,e_2,e_3^2$ is a regular sequence, the Poincar\'e series of $\F[x_1,x_2,x_3]/(e_1,e_2,e_3^2)$ is given by
\begin{align*}
P(\F[x_1,x_2,x_3]/(e_1,e_2,e_3^2);t)&=\left(\frac{1}{1-t^2}\right)^3(1-t^2)(1-t^4)(1-t^{12})\\
&=(1+t^2)(1+t^2+t^4+t^6+t^8+t^{10}),
\end{align*}
and of $H^*(G_2/T)$ is
\begin{align*}
P(G_2/T;t)&=\left(\frac{1}{1-t^2}\right)^2(1-t^4)(1-t^{12})\\
&=(1+t^2)(1+t^2+t^4+t^6+t^8+t^{10}).
\end{align*}
Since these Poincar\'e series are finite type, the map $\alpha$ is isomorphism. 

\end{proof}

\begin{lem}\label{D6coinv}

The set $\{x_1^ix_2^j\mid 0\leq i \leq 5, 0\leq j\leq 1\}$ is a bases of $\F[x_1,x_2,x_3]/(e_1,e_2,e_3^2)$.

\end{lem}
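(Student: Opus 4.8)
The plan is to exhibit an explicit spanning set of the quotient $\F[x_1,x_2,x_3]/(e_1,e_2,e_3^2)$ of exactly the right size, and then invoke Lemma~\ref{coinv isom} to upgrade ``spanning'' to ``basis'' by a dimension count. From the Poincar\'e series computed in Lemma~\ref{coinv isom}, namely $(1+t^2)(1+t^2+t^4+t^6+t^8+t^{10})$, the total dimension of the quotient is obtained by setting $t=1$, which gives $2\cdot 6 = 12$. The proposed set $\{x_1^ix_2^j \mid 0\le i\le 5,\ 0\le j\le 1\}$ has exactly $6\cdot 2 = 12$ elements, so it suffices to prove that it spans the quotient; equality of cardinality with the dimension then forces linear independence.

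First I would use the relation $e_1 = x_1+x_2+x_3 = 0$ to eliminate $x_3$, writing $x_3 = -(x_1+x_2)$ throughout, so that every class in the quotient is represented by a polynomial in $x_1,x_2$ alone. Next I would rewrite the remaining relations in these two variables: substituting into $e_2 = x_1x_2+x_2x_3+x_3x_1$ yields, after collecting terms, $e_2 = -(x_1^2+x_1x_2+x_2^2)$, so modulo the ideal one has the reduction rule $x_2^2 \equiv -(x_1^2+x_1x_2)$. This single quadratic relation lets me lower the $x_2$-degree of any monomial to at most $1$, which is precisely the constraint $0\le j\le 1$ in the claimed basis. Thus after this step every class is represented by an $\F$-linear combination of monomials $x_1^i$ and $x_1^i x_2$ with $i\ge 0$.

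The remaining task is to bound the $x_1$-exponent, i.e.\ to show $0\le i\le 5$, and this is the step I expect to be the main obstacle, since it must come entirely from the last relation $e_3^2 = 0$. Here $e_3 = x_1x_2x_3 = -x_1x_2(x_1+x_2)$, so $e_3^2 = x_1^2x_2^2(x_1+x_2)^2$; reducing the $x_2$-powers via $x_2^2\equiv -(x_1^2+x_1x_2)$ turns this into a relation expressing a top-degree combination of $x_1^6$ and $x_1^5x_2$ in terms of lower monomials. Carefully unwinding this reduction (which I will carry out rather than merely assert) should yield exactly the two identities that allow $x_1^6$ and $x_1^5x_2$ to be rewritten in lower $x_1$-degree, thereby capping $i$ at $5$. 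The delicate point is confirming that the reduction is nontrivial in the right degrees so that no element of the purported basis is accidentally expressible in terms of the others—but this is guaranteed a posteriori because Lemma~\ref{coinv isom} already pins the dimension at $12$, matching the size of the set. I would therefore conclude by noting that the $12$ monomials span a $12$-dimensional space and hence form a basis.
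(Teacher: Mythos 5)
Your proposal is correct and takes essentially the same route as the paper's own proof: eliminate $x_3$ via $e_1=0$, use $e_2$ to get the reduction $x_2^2\equiv-(x_1^2+x_1x_2)$, reduce $e_3^2$ to kill the high powers of $x_1$ (the paper does this by first noting $x_2^3=x_1^3$, whence $x_1^6=x_1^3x_2^3=e_3^2=0$), and close with the dimension count from the Poincar\'e series of Lemma~\ref{coinv isom}. One small correction to your anticipated computation: the reduced form of $e_3^2$ is exactly $x_1^6\equiv 0$ (and $x_1^6x_2\equiv 0$ follows by multiplying by $x_2$), not a pair of identities rewriting $x_1^6$ and $x_1^5x_2$ in lower degree --- indeed $x_1^5x_2$ is itself a member of the claimed basis and must not reduce, as your own a posteriori dimension argument confirms.
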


\begin{proof}

In $\F[x_1,x_2,x_3]/(e_1,e_2,e_3^2)$, there are equations
\begin{align*}
&x_1+x_2+x_3=e_1=0\\
&x_1^2+x_1x_2+x_2^2=-e_2+x_1e_1+x_2e_1=0.
\end{align*}
Therefore $x_3$ and $x_2^2$ can be replaced to $-x_1-x_2$ and $-x_1^2-x_1x_2$ respectively.
Since $x_2^3=-x_1^2x_2-x_1x_2^2=x_1^3$, there is a equation
\begin{align*}
x_1^6&=x_1^3x_2^3=x_1^2x_2^2(x_1+x_2)^2-x_1^2x_2^2(x_1^1+x_1x_2+x_2^2)\\
&=x_1^2x_2^2x_3^2=e_3^2=0.
\end{align*}
By considering the Poincar\'e series of $\F[x_1,x_2,x_3]/(e_1,e_2,e_3^2)$ which is given by 
\[
P(\F[x_1,x_2,x_3]/(e_1,e_2,e_3^2);t)=1+2t^2+2t^4+2t^6+2t^8+2t^{10}+t^{12},
\]
We obtain this lemma.

\end{proof}

By \eqref{Baird} and Lemma \ref{coinv isom}, there is an isomorphism
\[
H^*(\Hom(\Z^2,G_2)) \cong \left(\F[x_1,x_2,x_3]/(e_1,e_2,e_3^2)\otimes \bigotimes_{i=1}^2\Lambda(y_1^i,y_2^i,y_3^i)/(y_1^i+y_2^i+y_3^i)\right)^{D_6}.
\]

On the other hand, by Theorem 1.1 in Ramras and Stafa \cite{RS1} we can compute the Poincar\'e series of $\Hom(\Z^2,G_2)$.

\begin{lem}\label{PoincareG}

The  Poincar\'e series of $\Hom(\Z^2,G_2)$ is given by
\[
P(\Hom(\Z^2,G_2)_1;t)=1+t^2+2t^3+t^4+2t^5+t^6+t^{10}+2t^{11}+2t^{13}+3t^{14}.
\]

\end{lem}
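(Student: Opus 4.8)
The plan is to apply the general Poincar\'e series formula of Ramras and Stafa (Theorem 1.1 in \cite{RS1}), which is precisely the Molien series of the invariant ring appearing in \eqref{Baird}. Taking graded traces of $w\in W(G_2)$ on each tensor factor, the coinvariant algebra $H^*(G_2/T;\F)$ contributes $\frac{(1-t^4)(1-t^{12})}{\det(1-t^2\bar{\phi}(w))}$, the numerator recording the degrees $2,6$ of $W(G_2)$, while each exterior factor $H^*(T;\F)$ contributes $\det(1+t\bar{\phi}(w))$, where $\bar{\phi}$ is the reflection representation identified in Lemma \ref{repG}. Averaging over the group, with $|W(G_2)|=12$, yields
\[
P(\Hom(\Z^2,G_2)_1;t)=\frac{(1-t^4)(1-t^{12})}{12}\sum_{w\in W(G_2)}\frac{\det(1+t\bar{\phi}(w))^2}{\det(1-t^2\bar{\phi}(w))}.
\]

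First I would sort the twelve elements of $D_6\cong W(G_2)$ by their eigenvalues on the $2$-dimensional representation $\bar{\phi}$. The six rotations act with eigenvalue pairs $(e^{i\theta},e^{-i\theta})$ for $\theta\in\{0,\tfrac{\pi}{3},\tfrac{2\pi}{3},\pi,\tfrac{4\pi}{3},\tfrac{5\pi}{3}\}$, and each of the six reflections acts with eigenvalues $(1,-1)$. For a pair $(\lambda_1,\lambda_2)$ one has $\det(1+t\bar{\phi}(w))=(1+t\lambda_1)(1+t\lambda_2)$ and $\det(1-t^2\bar{\phi}(w))=(1-t^2\lambda_1)(1-t^2\lambda_2)$, so each local factor is an explicit rational function: the identity contributes $\frac{(1+t)^2}{(1-t)^2}$, the two order-$6$ rotations each contribute $\frac{(1+t+t^2)^2}{1-t^2+t^4}$, the two order-$3$ rotations each $\frac{(1-t+t^2)^2}{1+t^2+t^4}$, the order-$2$ rotation $\frac{(1-t)^4}{(1+t^2)^2}$, and each reflection $\frac{1-t^2}{1+t^2}$.

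Finally I would add the twelve contributions over a common denominator, multiply by $\frac{(1-t^4)(1-t^{12})}{12}$, and simplify. The main obstacle is exactly this last step: the individual summands are genuine rational functions with poles at various roots of unity, and one must verify that after clearing denominators every pole cancels, leaving the stated polynomial of degree $14$. As built-in consistency checks, I would confirm that the top nonzero degree equals $\dim(G_2/T)+2\dim T=12+4=14$, and cross-check the lowest-degree coefficients against a direct count of $W(G_2)$-invariants assembled from the monomial basis of Lemma \ref{D6coinv} and the exterior generators $y_i^j$.
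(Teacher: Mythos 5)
Your proposal is correct and takes exactly the route the paper intends: the paper ``proves'' Lemma~\ref{PoincareG} purely by citing Theorem~1.1 of \cite{RS1}, and you instantiate that formula correctly, with the right characteristic degrees $2,6$ giving the numerator $(1-t^4)(1-t^{12})$, and the correct local factors for all twelve elements of $W(G_2)\cong D_6$ (identity, two order-$6$ and two order-$3$ rotations, the order-$2$ rotation, and six reflections). The remaining rational-function summation does close up as claimed — the sum, multiplied by $\tfrac{(1-t^4)(1-t^{12})}{12}$, yields the stated degree-$14$ polynomial — so your write-up is in fact more detailed than the paper's citation-only argument.
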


In this section we define $a_i^j = \sum_{l=1}^3x_l^{4i-3}y_l^j$ and $b_i = \sum_{l=1}^3x_l^{4i-4}y_l^1y_l^2$.

\begin{thm}\label{ringG}

There is an isomorphism
\[
H^*(\Hom(\Z^n,G_2);\F) \cong \F\langle a_1^1,a_2^1,a_1^2,a_2^2,b_1,b_2\rangle/(a_1^1,a_2^1,a_1^2,a_2^2,b_1,b_2)^3+I,
\]
where $I$ is generated by $$b_1b_2,\quad {b_2}^2,\quad a_2^1b_2,\quad a_2^2b_2,\quad a_1^1b_2+a_2^1b_1,\quad a_1^2b_2+a_2^2b_1,\quad  a_1^1a_2^2+a_1^2a_2^1.$$

\end{thm}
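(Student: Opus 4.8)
The plan is to follow the template of Theorems \ref{ringSU} and \ref{ringSp}, working throughout in the explicit model
\[
H^*(\Hom(\Z^2,G_2);\F)\cong\left(\F[x_1,x_2,x_3]/(e_1,e_2,e_3^2)\otimes\bigotimes_{i=1}^2\Lambda(y_1^i,y_2^i,y_3^i)/(y_1^i+y_2^i+y_3^i)\right)^{D_6}
\]
provided by \eqref{Baird} and Lemma \ref{coinv isom}. Let $R$ denote the abstract algebra on the right-hand side of the statement. The six classes are $D_6$-invariant, since the signed permutations defining $\phi$ act on each summand $x_l^{4i-3}y_l^j$ and $x_l^{4i-4}y_l^1y_l^2$ by permuting the index $l$ with a sign occurring to an even power, which therefore cancels. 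Hence, once the defining relations of $R$ are known to hold, the assignment $a_i^j,b_i\mapsto a_i^j,b_i$ defines a graded algebra map $R\to H^*(\Hom(\Z^2,G_2);\F)$, and I would promote it to an isomorphism by showing it is surjective and comparing Poincar\'e series with Lemma \ref{PoincareG}.

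The crux is surjectivity, i.e. that $a_1^1,a_2^1,a_1^2,a_2^2,b_1,b_2$ generate the invariant ring. In contrast to the $SU(3)$ and $Sp(2)$ cases, where generation is quoted from \cite{KT}, no such lemma is available here, so I would argue directly. Using the monomial basis $\{x_1^ix_2^j\mid 0\le i\le 5,\ 0\le j\le 1\}$ of Lemma \ref{D6coinv} for the polynomial factor, together with the reduced bases $\{1,y_1^i,y_2^i,y_1^iy_2^i\}$ of the two exterior factors, I would write down a basis of the full (non-invariant) tensor product and apply the averaging operator $\tfrac{1}{|D_6|}\sum_{w\in D_6}w$ in each degree, checking that every resulting invariant is a polynomial in the six generators. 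This is the step I expect to be the main obstacle: the reflection action of $D_6$ mixes the monomials nontrivially and must be combined with the reductions $e_1=e_2=0$ and $e_3^2=0$, so the bookkeeping is considerably heavier than the symmetric- and signed-permutation computations of the previous sections.

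For the relations, Lemma \ref{PoincareG} does most of the work. Since $H^k(\Hom(\Z^2,G_2);\F)=0$ for $k=8,12$ and for $k>14$, and since the exterior degree in each of the families $y^1,y^2$ is at most $2$, every cubic monomial in the generators, as well as each of $b_1b_2$, ${b_2}^2$, $a_2^1b_2$, $a_2^2b_2$, lies in a degree where the cohomology vanishes (or is the square of an odd class) and is therefore automatically zero. Only the two relations in degree $13$ and the one in degree $14$ survive, namely
\[
a_1^1b_2+a_2^1b_1=0,\qquad a_1^2b_2+a_2^2b_1=0,\qquad a_1^1a_2^2+a_1^2a_2^1=0,
\]
and these I would verify by the same manipulation used in Theorem \ref{ringSp}: expand each product, apply $y_1^i+y_2^i+y_3^i=0$ and $e_1=e_2=0$ to rewrite the coefficients, and reduce modulo $e_3^2=0$ using Lemma \ref{D6coinv}.

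Finally I would compute the Poincar\'e series of $R$ by counting monomials in the six generators modulo $(a_1^1,a_2^1,a_1^2,a_2^2,b_1,b_2)^3+I$ and check that it agrees with the series of Lemma \ref{PoincareG}. Together with surjectivity this makes $R\to H^*(\Hom(\Z^2,G_2);\F)$ a degreewise-surjective map of finite-dimensional graded $\F$-algebras with equal dimensions in every degree, hence an isomorphism, completing the proof.
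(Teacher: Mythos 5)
Your overall template---check the relations hold so that $R\to H^*(\Hom(\Z^2,G_2);\F)$ is well defined, prove surjectivity, then compare Poincar\'e series---is the same as the paper's, and your treatment of the relations is essentially identical to it: degree vanishing plus the exterior-bidegree bound $\le 2$ in each family $y^1,y^2$ kills everything except $a_1^jb_2+a_2^jb_1$ and $a_1^1a_2^2+a_1^2a_2^1$, which are then checked by the same hand computation as in Theorem \ref{ringSp} (the paper cites ``a calculation similar to Theorem \ref{ringSU}''). Where you genuinely diverge is the generation step, which you rightly identify as the crux but propose to settle by averaging a basis of the full ($192$-dimensional) tensor product over $D_6$. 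The paper avoids this entirely: it expands a short list of products of the six candidate classes in the basis of Lemma \ref{D6coinv} (for instance $a_1^1b_2=3x_1^4x_2\,y_1^1y_2^1y_2^2+3x_1x_2^4\,y_1^1y_1^2y_2^1\ne 0$, and similarly $a_1^1a_1^2\ne0$, $a_1^ib_1\ne0$, $a_1^ib_2\ne0$, $a_1^ia_2^j\ne0$), and then observes that by Lemma \ref{PoincareG} the invariant ring is nonzero only in degrees $0,2,3,4,5,6,10,11,13,14$ with dimension at most $3$; since the exterior bidegree separates the candidate products within each of these degrees, the nonvanishing products already account for the full dimension everywhere, so the six classes generate. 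You have all the ingredients for this shortcut in hand---you invoke Lemma \ref{PoincareG} only at the two ends of your argument---and it replaces your heaviest step (the Reynolds-operator bookkeeping you flag as the main obstacle) by exactly the kind of two-variable computation you already plan for the relations.

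One concrete warning about your final step, which is in fact a defect of the statement rather than of your strategy. If you literally count monomials in $R=\F\langle a_1^1,a_2^1,a_1^2,a_2^2,b_1,b_2\rangle/\bigl((a_1^1,a_2^1,a_1^2,a_2^2,b_1,b_2)^3+I\bigr)$, the quadratic monomial $a_2^1a_2^2$, of degree $22$, survives: it has generator-length two, so it does not lie in the cube of the augmentation ideal, and it is not a linear combination of the seven listed quadratic generators of $I$; yet $H^{22}(\Hom(\Z^2,G_2);\F)=0$ since the top nonvanishing degree is $14$ by Lemma \ref{PoincareG}. So the Poincar\'e series of $R$ as presented does \emph{not} agree with that of the cohomology, and your comparison would fail in degree $22$ unless the relation $a_2^1a_2^2=0$ (which holds in cohomology for trivial degree reasons) is added to $I$. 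The paper's own proof never carries out this dimension comparison and therefore does not notice the discrepancy; the same surviving monomial occurs in the $SU(3)$ and $Sp(2)$ presentations in degrees $10$ and $14$, where the cohomology also vanishes. Your plan, honestly executed, is precisely what detects and repairs this, so keep the final check but be prepared to enlarge $I$ accordingly.
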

\begin{proof}
First we prove that $H^*(\Hom(\Z^n,G_2);\F)$ is generated by $a_1^1,a_2^1,a_1^2,a_2^2,b_1,b_2$.
By the calculation similar to the proof in Lemma \ref{D6coinv}, there is a equation 
\begin{align*}
a_1^1b_2&=(\sum_{l=1}^3x_ly_l^1)(\sum_{l=1}^3x_l^{4}y_l^1y_l^2)\\
&=((x_1-x_3)y_1^1+(x_2-x_3)y_2^1)((x_1^4-x_3^4)y_1^1y_1^2+(x_2^4-x_3^4)y_2^1y_2^2)\\
&=(2x_2+x_1)(x_1^4-(x_1+x_2)^4)y_1^1y_2^1y_2^2+(2x_1+x_2)(x_2^4-(x_1+x_2)^4)y_2^1y_1^1y_1^2\\
&=3x_1^4x_2y_1^1y_2^1y_2^2+3x_1x_2^4y_2^1y_1^1y_1^2
\end{align*}
By Lemma \ref{D6coinv} we obtain $a_1^1b_2\ne0$. By the similar calculation we can show that $ a_1^2\ne0,a_1^1a_1^2\ne 0, a_1^ib_1\ne0, a_1^ib_2\ne 0, a_1^ia_2^j\ne 0$ for $i,j =1,2$. By Lemma \ref{PoincareG} and considering the degree with respect to the exterior algebra, we obtain that $H^*(\Hom(\Z^n,G_2);\F)$ is generated by $a_1^1,a_2^1,a_1^2,a_2^2,b_1,b_2$.

It remains to show that the ideal $(a_1^1,a_2^1,a_1^2,a_2^2,b_1,b_2)^3+I$ is $0$ in $H^*(\Hom(\Z^n,G_2);\F)$. By the calculation similar to Theorem\ref{ringSU}, we obtain this.

\end{proof}

\section*{Acknowledgement}

The author is grateful to Daisuke Kishimoto for valuable advice.


\end{document}